\theoremstyle{plain}
\newtheorem{theorem}{Theorem}[section]
\newtheorem{lemma}[theorem]{Lemma}
\newtheorem{remark}[theorem]{Remark}
\theoremstyle{definition}
\newtheorem{definition}[theorem]{Definition}
\begin{document}
\title[Singular  $p$-biharmonic problems]{Singular $p$-biharmonic problems involving the Hardy-Sobolev exponent}
\centerline{}
\author{A. Drissi} 
\address{A.  Drissi, ORCID ID: 0000-0003-2704-4997
\newline
Department of Mathematics, Faculty of Sciences, University of Tunis El Manar, 2092 Tunis, Tunisia.}
\email{\href{mailto: A. Drissi <amor.drissi@ipeiem.utm.tn>}{amor.drissi@ipeiem.utm.tn}}
\author{A.  Ghanmi}
\address{A.  Ghanmi, ORCID ID: 0000-0001-8121-0496 
 \newline
Department of Mathematics, Faculty of Sciences, University of Tunis El Manar, 2092 Tunis, Tunisia.}
\email{\href{mailto: A. Ghanmi <abdeljabbar.ghanmi@gmail.com>}{abdeljabbar.ghanmi@gmail.com}}
\author{D.D.  Repov\v{s}}
\address{D.D.  Repov\v{s}, ORCID ID: 0000-0002-6643-1271
 \newline
Faculty of Education, University of Ljubljana, 1000 Ljubljana, Slovenia.\newline
Faculty of Mathematics and Physics, University of Ljubljana, 1000 Ljubljana,
Slovenia.\newline
Institute of Mathematics, Physics and Mechanics, 1000 Ljubljana, Slovenia.}
\email{\href{mailto: D.D.  Repov\v{s} <dusan.repovs@guest.arnes.si.>}{dusan.repovs@guest.arnes.si}}
\subjclass[2020]{Primary 31B30; Secondary 35J35,  49J35.}
\keywords{$p$-Laplacian operator,
$p$-Biharmonic equation, 
Variational methods, 
Existence of solutions, 
Hardy potential, 
Critical Hardy-Sobolev exponent,
Ekeland variational principle, 
Mountain pass geometry.}
\begin{abstract}
This paper is concerned with existence results for the singular    
$p$-biharmonic problem involving the Hardy potential
and the critical Hardy-Sobolev exponent.  More precisely,  by using variational methods combined with the Mountain pass theorem and the Ekeland variational principle, we establish the existence and multiplicity of solutions. To illustrate the usefulness of our
results, an illustrative example is also presented.
\end{abstract}
\date{}
\maketitle
\numberwithin{equation}{section}
\section{Introduction}\label{s1}
Recently, a lot of attention has been paid to the study of problems involving the $p$-Laplacian operator and the $p$-biharmonic operator.  We refer the reader to {\sc Alsaedi et al.} \cite{dhifli},  {\sc Chung et al.} \cite{chung}, {\sc Huang and  Liu} \cite{hwa}, 
and
{\sc Sun and  Wu} \cite{sun2, sun}. The reasons to study these types of problems are their applications in many fields such as quantum mechanics, flame propagation, and the traveling waves in suspension bridges (for more applications see, e.g., {\sc Bucur and Valdinoci} \cite{1}
and
 {\sc  Lazer and McKenna} \cite{11}).  Problems involving Hardy terms have been extensively investigated by several authors, see, e.g., {\sc Bhakta et al.} \cite{ bhak, bhak1}, {\sc Ghoussoub and Yuan} \cite{8}, 
 and
 {\sc Guan et al.} \cite{x2}. Also, various problems involving the critical Hardy-Sobolev exponent have been widely studied, see, e.g.,
  {\sc Chaharlang and  Razani} \cite{chah}, {\sc Chen et al.} \cite{x1}, {\sc Perera and Zou} \cite{23},  {\sc P\'erez-Llanos and Primo} \cite{pere}, {\sc Wang} \cite{wang}, 
  and
  {\sc Wang and Zhao} \cite{wang2}. 

In particular, {\sc Ghoussoub and Yuan} \cite{8}  used variational methods to study the question of the existence  of solutions of  the following problem
$$\begin{cases}-\Delta_{p}\varphi=\lambda |\varphi|^{r-2}\varphi+\mu\frac{|\varphi|^{q-2}\varphi}{|x|^\alpha}&\mbox{in }\;\Omega,\\
\varphi=0&\mbox{on}\;\partial\Omega,
\end{cases}$$
where $\Omega\subset \mathbb{R}^n$ is a regular bounded domain, $\mu$ and $\lambda$ are positive parameters,  $\min(q,r)\geq p$,   $q\leq p^\ast(\alpha),$ and $ r\leq p^\ast$.

{\sc Perrera and Zou} \cite{23}  investigated  the following critical Hardy  $p$-Laplacian problem 
\begin{equation}
\label{eq2}
\begin{cases}\displaystyle
-\Delta_{p}\varphi=\lambda |\varphi|^{p-2}\varphi+\frac{|\varphi|^{p^{\ast}(\alpha)-2}\varphi}{|x|^\alpha}&\mbox{in }\;\Omega,\\
\varphi=0&\mbox{on}\;\partial\Omega.
\end{cases}\end{equation}
More precisely, they used variational methods to 
establish
 the multiplicity of solutions of problem \eqref{eq2}.
Recently, {\sc Wang} \cite{wang} considered the following  problem
\begin{equation}\label{eq3}\begin{cases}\Delta^2_{p}\varphi=h(x,\varphi)+\mu\frac{|\varphi|^{r-2}\varphi}{|x|^s}&\mbox{in }\;\Omega,\\
\varphi=\Delta \varphi=0&\mbox{on}\;\partial\Omega.
\end{cases}\end{equation}
He used the Mountain pass theorem to establish
 the existence of solutions of problem \eqref{eq3}. Moreover, the existence of multiple solutions was established
by applying the Fountain Theorem.

Motivated by the above
mentioned results, we study in the present paper the existence and the multiplicity of solutions of the following  singular $p$-biharmonic problem involving  the  Hardy potential and the critical Hardy-Sobolev exponent
\begin{equation}\label{p}
\Delta_{p}^{2}\varphi-\lambda \frac{|\varphi|^{p-2}\varphi}{|x|^{2p}}+\Delta_p \varphi=  \mu f(x)h(\varphi)+ \frac{|\varphi|^{p^\ast(\alpha)-2}\varphi}{|x|^{\alpha}}
 \quad \mbox{ in }\mathbb{R}^N,
\end{equation}
where $0\leq \alpha <2p$, $1<p<\frac{N}{2}$,  $\lambda>0$,  $\mu>0,$  and $p^\ast(\alpha):=\frac{p(N-\alpha)}{N-2 p}.$
Here,
$\Delta_{p} $ and $\Delta_{p}^{2}$ are the $p$-Laplacian  and the $p$-biharmonic operator,  respectively, defined  by
$\Delta_{p}\varphi:=\mbox{div}(|\nabla \varphi|^{p-2}\nabla \varphi)$
and
$\Delta^2_{p}\varphi:=\Delta(|\Delta \varphi|^{p-2}\Delta \varphi),$
respectively,  $f$ is a positive function,    $h$ is  a
 continuous
 function, and     the  following hypotheses are satisfied
 
$(H_1)$  There exists  $r\in(p,p^\ast)$
 such that
 $f \in L^\infty(\mathbb{R}^N)$
 and
 $$|h(\varphi)|\leq c_1 |\varphi|^{r-1}, \;\mbox{for every}\;\varphi\in E
 \
 \hbox{and some positive constant}
 \
 c_1,$$ where $p^\ast:=\frac{pN}{N-2 p}$  and the space  $E:=W^{2,p}(\mathbb{R}^N)$ is defined 
  in Section \ref{s2}.
  
$(H_2)$  There exists $\sigma>0$ such that for every $y\in \mathbb{R}^N $,  we have
$$
0<r  H(\varphi)\leq h(\varphi)\varphi, \;|\varphi|\geq\sigma>0,
\
\hbox{where}
\
H(t):=\int_{0}^{t}h(s)ds.
$$

$(H_3)$  There exist $c_1>0$, $1<r<p,$  and $s \in (\frac{p^\ast}{p^\ast-r}, \frac{p}{p-r})$
 such that
 $$
 0<f \in L^{\frac{p^\ast}{p^\ast-r}}(\mathbb{R}^N)\cap L^s_{loc}(\mathbb{R}^N)
 \
\hbox{and}
\
 |h(\varphi)|\leq c_1 |\varphi|^{r-1},\,\mbox{for every}\;\varphi\in E.
 $$

The following are the main results of this paper.
\begin{theorem}\label{thm}
Suppose that  hypotheses $(H_1)$ and $(H_2)$ hold.  
Then for every
 $\mu>0$, the  singular $p$-biharmonic problem \eqref{p}  has
  at least one   nontrivial weak solution,   provided that $\lambda>0$ is small enough. 
\end{theorem}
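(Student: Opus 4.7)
The natural strategy is to apply the Mountain Pass Theorem to the Euler-Lagrange functional associated with \eqref{p}. The plan is to work in the space $E=W^{2,p}(\mathbb{R}^N)$ and define
\[
I_{\lambda,\mu}(\varphi)=\frac{1}{p}\!\int_{\mathbb{R}^N}\!\!\bigl(|\Delta\varphi|^p+|\nabla\varphi|^p\bigr)dx-\frac{\lambda}{p}\!\int_{\mathbb{R}^N}\!\!\frac{|\varphi|^p}{|x|^{2p}}dx-\mu\!\int_{\mathbb{R}^N}\!\!f(x)H(\varphi)dx-\frac{1}{p^\ast(\alpha)}\!\int_{\mathbb{R}^N}\!\!\frac{|\varphi|^{p^\ast(\alpha)}}{|x|^{\alpha}}dx.
\]
First, I would use the Hardy-Rellich inequality to control the singular term $\int|\varphi|^p/|x|^{2p}$ by $\int|\Delta\varphi|^p$ with the best constant, say $C_{H}$. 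For $\lambda<C_H$ (this is where the smallness hypothesis on $\lambda$ enters), the quantity $\|\varphi\|_{\lambda}^p:=\int|\Delta\varphi|^p+\int|\nabla\varphi|^p-\lambda\int|\varphi|^p/|x|^{2p}$ defines a norm equivalent to the usual $W^{2,p}$ norm. Then, from $(H_1)$ I would use the growth bound $|H(\varphi)|\le \tfrac{c_1}{r}|\varphi|^{r}$ together with Sobolev embedding $E\hookrightarrow L^{r}(\mathbb{R}^N)$ (recall $r\in(p,p^\ast)$), and for the Hardy-Sobolev term I would invoke the corresponding Hardy-Sobolev embedding $E\hookrightarrow L^{p^\ast(\alpha)}(\mathbb{R}^N;|x|^{-\alpha}dx)$ to get the basic a priori estimate.

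The Mountain Pass geometry follows from these estimates in a standard manner: since $r>p$ and $p^\ast(\alpha)>p$, for small $\|\varphi\|_{\lambda}=\rho$ the two nonlinear terms are of higher order, so $I_{\lambda,\mu}(\varphi)\geq c\rho^p-C\rho^r-C'\rho^{p^\ast(\alpha)}\geq \eta>0$. To find an element $v$ with $I_{\lambda,\mu}(v)<0$, the Ambrosetti-Rabinowitz condition $(H_2)$ (combined with an integrated lower bound $H(\varphi)\ge c|\varphi|^r$ for $|\varphi|$ large, obtained by integrating $(H_2)$) guarantees that along a ray $t\varphi_0$ with $\varphi_0\not\equiv 0$ supported where $f>0$, the functional tends to $-\infty$ as $t\to+\infty$, so taking $v=t\varphi_0$ for $t$ large yields the desired mountain configuration.

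Next, using $(H_2)$, I would show that every Palais-Smale sequence $\{\varphi_n\}$ at level $c$ is bounded in $E$: the standard calculation combining $I_{\lambda,\mu}(\varphi_n)=c+o(1)$ with $\tfrac{1}{r}\langle I_{\lambda,\mu}'(\varphi_n),\varphi_n\rangle=o(\|\varphi_n\|_\lambda)$, using $r>p$, produces the inequality $(\tfrac{1}{p}-\tfrac{1}{r})\|\varphi_n\|_\lambda^p\le C+o(\|\varphi_n\|_\lambda)$. Up to a subsequence, $\varphi_n\rightharpoonup\varphi$ in $E$. The subcritical term involving $f$ and $h$ passes to the limit by compactness arguments (dominated convergence combined with the subcritical growth and $f\in L^\infty$). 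The obstacle is the critical Hardy-Sobolev term, for which compactness fails.

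The main obstacle, therefore, is recovering strong convergence in the presence of the critical Hardy-Sobolev exponent. To handle this I would employ the concentration-compactness principle adapted to the Hardy-Sobolev setting to show that the $(PS)_c$ condition holds below a threshold $c^\ast=\bigl(\tfrac{1}{p}-\tfrac{1}{p^\ast(\alpha)}\bigr)S_{\alpha}^{\frac{N-\alpha}{2p-\alpha}}$, where $S_\alpha$ is the best constant in the Hardy-Sobolev embedding. One must then verify that the Mountain Pass level $c$ lies strictly below $c^\ast$: this is accomplished by testing along appropriately truncated extremal functions (Talenti-type minimizers for the Hardy-Sobolev inequality) and exploiting the presence of the Hardy term (which is where $\lambda>0$ helps, even when forced to be small) together with the subcritical perturbation, to obtain a strict energy gap. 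Once $c<c^\ast$ is established, the Mountain Pass Theorem produces a critical point $\varphi\neq 0$, which is the desired nontrivial weak solution of \eqref{p}.
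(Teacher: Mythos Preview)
Your overall strategy---Mountain Pass on the functional $J_\mu$ over $E$, with the Rellich inequality justifying the $\lambda$-dependent norm for $\lambda$ small---is exactly the paper's, and your verification of the geometry matches Lemmas~\ref{coer} and~\ref{lem2.3} essentially line for line. The boundedness of Palais--Smale sequences via $(H_2)$ is also the same (the paper combines $\theta J_\mu-\langle J_\mu',\cdot\rangle$ with $\theta=\min(r,p^\ast(\alpha))$, but your $(\tfrac1p-\tfrac1r)$ variant works equally well).

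The real divergence is in how compactness is recovered. You propose concentration--compactness together with a direct verification of $c<c^\ast$ by testing on Talenti-type extremals. The paper instead runs a shorter Br\'ezis--Lieb argument (Lemma~\ref{lem2.4}): setting $w_n=\varphi_n-\varphi$ and $l=\lim\|w_n\|^p=\lim\int|x|^{-\alpha}|w_n|^{p^\ast(\alpha)}$, if $l>0$ the energy splitting gives $c-J_\mu(\varphi)=(\tfrac1p-\tfrac1{p^\ast(\alpha)})l$, and combining this with $l\ge S_{p^\ast(\alpha)}^{p^\ast(\alpha)/(p^\ast(\alpha)-p)}$ and the assumed bound $c<c^\ast$ forces $J_\mu(\varphi)<0$. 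The paper's specific device---absent from your outline---is then to observe that the weak limit $\varphi$ is itself a critical point, and $(H_2)$ applied to $\langle J_\mu'(\varphi),\varphi\rangle=0$ yields $J_\mu(\varphi)\ge 0$, a contradiction. This replaces any concentration analysis. Note, however, that both routes rely on the strict inequality $c<c^\ast$: the paper simply \emph{asserts} it inside the proof of Lemma~\ref{lem2.4} without justification, whereas you propose to establish it via extremal profiles. Be warned that for the $p$-biharmonic operator with Hardy potential on $\mathbb{R}^N$ the minimizers of $S_{p^\ast(\alpha)}$ are not known in closed form, so the Aubin--Br\'ezis--Nirenberg asymptotic computation you have in mind may not be available; this energy bound is in effect a shared gap in both arguments.
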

\begin{theorem}\label{thmm} 
Suppose that  hypotheses $(H_2)$ and $(H_3)$ hold.  
Then  there exists $\mu_0>0$  such that for every
 $\mu\in(0,\mu_0)$,  the singular $p$-biharmonic problem \eqref{p}  has
  at least two   nontrivial weak solutions,  provided that $\lambda>0$ is small enough. 
\end{theorem}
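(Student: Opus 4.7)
The plan is to produce the first solution as a local minimum via Ekeland's variational principle, and the second solution via a Mountain Pass argument based at this local minimum, the loss of compactness from the critical Hardy--Sobolev term being controlled through the classical concentration threshold. Throughout, $I_{\mu,\lambda}:E\to\R$ denotes the energy functional associated to \eqref{p}, whose leading term is $\|\varphi\|_E^p/p$ (the Hardy potential being absorbable into it for $\lambda$ smaller than the best Hardy--Rellich constant), and whose nonlinear part combines the subcritical contribution $-\mu\int f(x)H(\varphi)$ with the critical contribution $-\frac{1}{p^\ast(\alpha)}\int\frac{|\varphi|^{p^\ast(\alpha)}}{|x|^\alpha}$; the critical exponent $p^\ast(\alpha)$ comes with its best Hardy--Sobolev constant $S_\alpha$ for the embedding $E\hookrightarrow L^{p^\ast(\alpha)}(\R^N,|x|^{-\alpha}dx)$.

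For the first solution, H\"older's inequality with exponents $p^\ast/r$ and $p^\ast/(p^\ast-r)$, together with $f\in L^{p^\ast/(p^\ast-r)}$ from $(H_3)$ and the continuous embedding $E\hookrightarrow L^{p^\ast}$, yields $\mu\int f(x)H(\varphi)\,dx\leq C\mu\|\varphi\|_E^r$, so that
$$I_{\mu,\lambda}(\varphi)\geq c_1\|\varphi\|_E^p-c_2\mu\|\varphi\|_E^r-c_3\|\varphi\|_E^{p^\ast(\alpha)}.$$
Because $r<p<p^\ast(\alpha)$, the right-hand side admits a strict positive local maximum at some radius $\rho_0>0$ once $\mu$ is small enough, while testing on $t\varphi_0$ with $t$ small and $\varphi_0\neq 0$ gives $\inf_{\overline{B_{\rho_0}}}I_{\mu,\lambda}<0$. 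Ekeland's variational principle then furnishes a bounded Palais--Smale sequence inside $\overline{B_{\rho_0}}$; since $\rho_0$ sits below the concentration threshold of $S_\alpha$ in the ball, a standard Brezis--Lieb argument rules out loss of mass and the sequence converges strongly to a nontrivial critical point $\varphi_1$ with $I_{\mu,\lambda}(\varphi_1)<0$.

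Since $\varphi_1$ is a (strict) local minimum and $I_{\mu,\lambda}(\varphi_1+te)\to-\infty$ as $t\to\infty$ for any $e\neq 0$ (the critical term dominates because $p^\ast(\alpha)>p$), the Mountain Pass geometry around $\varphi_1$ is verified with min-max level $c>I_{\mu,\lambda}(\varphi_1)$; boundedness of Palais--Smale sequences at level $c$ follows from $(H_2)$ together with the standard manipulation of $pI_{\mu,\lambda}-\langle I'_{\mu,\lambda},\cdot\rangle$. To recover compactness one must verify the strict threshold
$$c-I_{\mu,\lambda}(\varphi_1)<\frac{2p-\alpha}{p(N-\alpha)}\,S_\alpha^{\frac{N-\alpha}{2p-\alpha}},$$
which will be obtained by plugging the family of rescaled cut-off extremals $U_\epsilon$ of $S_\alpha$ concentrated at the origin into a test path issuing from $\varphi_1$ and expanding $\max_{t\geq 0}I_{\mu,\lambda}(\varphi_1+tU_\epsilon)$ as $\epsilon\to 0$, ultimately taking $\mu_0$ small enough. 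Once the strict inequality holds, concentration-compactness applied to the measures $|\Delta\varphi_n|^p$ and $|\varphi_n|^{p^\ast(\alpha)}/|x|^\alpha$ excludes mass concentration at the origin below the threshold, and the Mountain Pass theorem yields a critical point $\varphi_2$ with $I_{\mu,\lambda}(\varphi_2)=c>I_{\mu,\lambda}(\varphi_1)$, hence distinct from $\varphi_1$.

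The main obstacle will be precisely the asymptotic analysis underlying the strict Mountain Pass threshold: because $(H_3)$ only supplies a purely sublinear perturbation ($r<p$), the gain contributed by the concave term $\mu\int f H(U_\epsilon)$ is subtle, and its scaling in $\epsilon$ together with the singular weight $|x|^{-\alpha}$ and the biharmonic leading part must be balanced carefully against the subleading corrections in the energy of the Hardy--Sobolev extremal. This is the analogue of the Brezis--Nirenberg trick in the present singular $p$-biharmonic setting, and it is exactly the step that forces the restriction $\mu\in(0,\mu_0)$.
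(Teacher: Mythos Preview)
Your overall plan is reasonable in spirit, but it diverges substantially from the paper's argument and also carries a genuine risk in one place.

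\medskip
\textbf{How the paper actually proceeds.} The paper does \emph{not} run the Mountain Pass from the local minimizer, and it does \emph{not} perform any Brezis--Nirenberg type threshold estimate with rescaled extremals $U_\epsilon$ of the Hardy--Sobolev inequality. Instead it does the two constructions independently and in the opposite order. First, it shows (Lemma~\ref{lem3.1}) that for $\mu$ below an explicit $\mu_0$ one has $J_\mu\ge\eta>0$ on some sphere $\|\varphi\|=\rho$; this is where the restriction $\mu<\mu_0$ enters, purely as a \emph{geometry} condition (balancing $\|\varphi\|^{p-r}$ against $\|\varphi\|^{p^\ast(\alpha)-r}$), not as a compactness threshold. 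Combined with $J_\mu(e)<0$ for some large $e$ and a Palais--Smale lemma (Lemma~\ref{lem3.3}), the ordinary Mountain Pass from the origin gives a critical point at a positive level. Second, and separately, Ekeland's principle on $\overline{B(0,\rho)}$ yields a minimizing critical point at a negative level. The two solutions are distinguished by the sign of their energy. No concentration--compactness, no $U_\epsilon$ asymptotics, and no path issuing from $\varphi_1$ are used.

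\medskip
\textbf{Where your plan is riskier.} Your step that you yourself flag as the ``main obstacle'' is not merely technical: the explicit extremals for the \emph{$p$-biharmonic} Hardy--Sobolev constant $S_\alpha$ are, as far as I know, not available in closed form, so the fine $\epsilon$-expansion of $\max_{t\ge 0} I_{\mu,\lambda}(\varphi_1+tU_\epsilon)$ that you need to push $c$ strictly below the concentration threshold cannot be carried out in the standard way. In the classical second-order setting this is the heart of Brezis--Nirenberg, but here you would first have to produce enough information about optimizers of $S_\alpha$ (existence, decay, and energy asymptotics under cut-off) before any such computation makes sense. The paper avoids this entirely. A secondary issue is that you invoke ``$\varphi_1$ is a (strict) local minimum'' to set up the Mountain Pass around it; strictness is not automatic from Ekeland and would need a separate argument, whereas the paper's Mountain Pass from $0$ needs only the sphere estimate $J_\mu\ge\eta>0$ already proved.

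\medskip
In short: your route is the Tarantello/Brezis--Nirenberg template, which is heavier and here faces a real obstruction at the extremal step; the paper's route is the lighter ``positive-level Mountain Pass from $0$'' plus ``negative-level Ekeland on a ball'', with the smallness of $\mu$ used only for geometry.
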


\begin{remark}
Note that   the singular $p$-biharmonic problem \eqref{p}
  is very important
  since it contains the $p$-biharmonic operator, the $p$-Laplacian operator, the singular nonlinearity,  and the Hardy potential. Moreover, it appears in many applications,  such as non-Newtonian fluids,  viscous fluids,  traveling waves in suspension bridges, and various other physical phenomena, see, e.g.,  {\sc Chen et al.} \cite{70}, {\sc Lazer and McKenna} \cite{11},
  and
   {\sc Ru\v{z}i\v{c}ka} \cite{140}. 
\end{remark}
The  paper is organized as follows: 
In Section \ref{s2},  we present some variational framework related to problem \eqref{p}.   
 In  Section \ref{s3}, we prove   Theorem \ref{thm}.   In Section \ref{s4}  we  combine the Mountain pass theorem with the Ekeland variational principle to prove the multiplicity of solutions of problem \eqref{p}
 (Theorem \ref{thmm}). 
  In Section \ref{s5} an example is presented to illustrate our main results.
  Finally, in Section \ref{s6} we summarize the main contributions of this paper.

\section{Preliminaries}\label{s2}
We begin by recalling  some necessary facts
related to the Hardy-Sobolev exponent nonlinearity. 
We finish this section by presenting the variational framework related to problem \eqref{p}.
For other necessary background material we refer to the comprehensive monograph
by {\sc Papageorgiou et al.} \cite{PRR}.

It is well-known that the Hardy-Sobolev exponent is closely related to the  following Rellich inequality (see  {\sc Davies and  Hinz} \cite[p. 520]{davi})
\begin{equation}\label{rellich}
  \int_{\mathbb{R}^N}\frac{|\varphi(x)|^p}{|x|^{2p}}\,dx\leq \left(\frac{p^2}{N(p-1)(N-2p)}\right)^p\int_{\mathbb{R}^N}|\Delta \varphi(x)|^p\,dx,\;\mbox{for every }\;\varphi\in W^{2,p}(\mathbb{R}^N),
\end{equation}
where $W^{2,p}(\mathbb{R}^N)$ denotes  the Sobolev space which is  defined by
$$W^{2,p}(\mathbb{R}^N):=\left\{\varphi\in L^p(\mathbb{R}^N): \Delta \varphi ,\;|\nabla \varphi| \in L^p(\mathbb{R}^N)\right\}.$$
For more details about this space, see,
e.g.,   {\sc Davies and  Hinz} \cite{davi}, {\sc Mitidieri} \cite{miti},
and {\sc Rellich} \cite{reli}.

According to the Rellich inequality \eqref{rellich}, $W^{2,p}(\mathbb{R}^N)$ can be endowed  with the following norm
$$\|\varphi\|:=\left(\int_{\mathbb{R}^N}|\Delta \varphi(x)|^p-\lambda \frac{|\varphi(x)|^p}{|x|^{2p}}+|\nabla \varphi(x)|^p\,dx\right)^{\frac{1}{p}},$$
provided that \begin{equation} \label{lamb}0<\lambda < \left(\frac{N(p-1)(N-2p)}{p^2}\right)^p.\end{equation}
The space $W^{2,p}(\mathbb{R}^N)$ is continuously embeddable into $L^{\sigma}(\mathbb{R}^N)$ for every  $p\leq \sigma \leq p^\ast$, and compactly embeddable into $L^{\sigma}_{loc}(\mathbb{R}^N),$ for every $p\leq \sigma < p^\ast$. Moreover, for every $\varphi\in W^{2,p}(\mathbb{R}^N)$, one has
\begin{equation}\label{const}
 |\varphi|_\sigma\leq S_\sigma^{-\frac{1}{p}}\|\varphi\|,
\end{equation}
where $|\varphi|_p$ denotes the usual $L^{p}(\mathbb{R}^N)$-norm  and $S_\sigma$ is defined  by
\begin{equation}
 \label{Slambda}
 S_\sigma:=\displaystyle\inf_{\varphi\in W^{2,p}(\mathbb{R}^N), \varphi\neq 0}\frac{\int_{\mathbb{R}^N}|\Delta \varphi(x)|^p-\lambda \frac{|\varphi(x)|^p}{|x|^{2p}}+|\nabla \varphi(x)|^p\,dx}{\left(\int_{\mathbb{R}^N}|x^{-\alpha}||\varphi(x)|^{\sigma}\,dx\right)^{\frac{p}{{\sigma}}}}.
\end{equation}
Hereafter, for simplicity, we shall denote $E:=W^{2,p}(\mathbb{R}^N)$.

 We define the weighted Lebesgue space $L^r(\mathbb{R^N}, f)$ by
$$L^r(\mathbb{R^N}, f):=\left\{\varphi:\mathbb{R}^N \to \mathbb{R}: 
\varphi 
\ 
\mbox{is measurable and}
\
\int_{\mathbb{R}^N} f(x)|\varphi(x)|^r\,dx<\infty\right\},$$
and endow it  with the following norm
$$\|\varphi\|_{r, f}:=\left(\int_{\mathbb{R}^N} f(x)|\varphi(x)|^r\,dx\right)^{\frac{1}{r}}.$$
Then  $L^r(\mathbb{R^N}, f)$ is a uniformly convex Banach space. 
  {\sc Dhifli and Alsaedi} \cite{dhif}  proved that under hypothesis $(H_3)$, the embedding $E \hookrightarrow L^r(\mathbb{R}^N, f)$ is continuous and compact. Moreover, one has the following estimate
 \begin{equation}\label{esti}
\|\varphi\|^r_{r, f}\leq  S_{p^*}^{-\frac{r}{p}}|f|_{\frac{p^\ast}{p^\ast-r}}\|\varphi\|^r,\;\;\mbox{for every}\;\;\varphi\in  E.
\end{equation}

Now, let us introduce the notion of weak solutions.
\begin{definition}
A function $\varphi\in E$ is said to be a {\it weak solution} of  problem \eqref{p}, provided that 
$$\Lambda(\varphi,\psi)=\mu \int_{\mathbb{R}^N} f(x) h(\varphi) \psi\,dx+\int_{\mathbb{R}^N}|x|^{-\alpha} \varphi^{p^\ast(\alpha)-2} \varphi \psi\,dx,\;\;\mbox{for every }\;\psi\in E,$$
where $$\Lambda(\varphi,\psi):=\int_{\mathbb{R}^N}|\Delta \varphi|^{p-2}\Delta \varphi \Delta \psi-\lambda \frac{|\varphi|^{p-2}\varphi \psi}{|x|^{2p}}+|\nabla \varphi|^{p-2}\nabla \varphi \nabla \psi\;dx.$$
\end{definition}
\noindent
We define the energy functional $J_\mu: E \to \mathbb{R}$, by
$$J_\mu(\varphi):=\frac{1}{p}\|\varphi\|^p -\mu \int_{\mathbb{R}^N} f(x){ h(\varphi) \varphi}\,dx-\frac{1}{p^\ast(\alpha)}\int_{\mathbb{R}^N}|x|^{-\alpha}  \varphi^{p^\ast(\alpha)} \,dx.$$
Note that a function $\varphi\in E$ is a weak solution of problem \eqref{p}, if it   satisfies $J'_\mu(\varphi)=0$, 
i.e.,
 $\varphi$ is a critical value for $J_\mu$.
\begin{definition}
We say that  a function $\Phi\in C^{1}(F,\mathbb{R})$,  where $F$ is a Banach space,
satisfies the {\it Palais-Smale condition,} 
 if every  sequence
  $\{\varphi_{n}\}\subset F$, 
  such that
$ \Phi(\varphi_{n})$
is bounded and
$\Phi'(\varphi_{n})\rightarrow 0$
 in
 $ F^{\ast},$
 as
 $ n\rightarrow\infty,$
contains a convergent subsequence.
\end{definition}

To prove Theorem \ref{thm},  we shall need the following result which is proved in {\sc Ambrosetti and Rabinowitz} \cite[Theorem 2.4]{ambr}.
\begin{theorem}(Mountain pass theorem)\label{th1}  
  Let $\Phi\in C^{1}(F,\mathbb{R})$,  where $F$ is a Banach space, and suppose that   $ \varphi\in F$ is such that $||\varphi||>r$, for some $ r>0,$ and 
$ \inf_{||\psi||=r}\Phi(\psi)>\Phi(0)>\Phi(\varphi).$
If in addition, $\Phi$ satisfies the Palais-Smale  condition  at level $c$, then $ c$  is a critical value of $\Phi$, where 
$c:=\inf_{\gamma\in\Gamma}\max_{s\in[0,1]}\Phi(\gamma(s))$
and
$\Gamma=\{\gamma\in C([0,1], F):(\gamma(0),\gamma(1))=(0,\varphi)\}.$
\end{theorem}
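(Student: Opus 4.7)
The plan is to argue by contradiction via the standard deformation lemma. First I would observe that the geometric hypothesis forces $c > \max\{\Phi(0),\Phi(\varphi)\}$. Indeed, since $\|\varphi\|>r>0=\|0\|$, any path $\gamma\in\Gamma$ must cross the sphere $\{\psi\in F:\|\psi\|=r\}$ by continuity of $s\mapsto\|\gamma(s)\|$ and the intermediate value theorem; hence $\max_{s\in[0,1]}\Phi(\gamma(s))\geq\inf_{\|\psi\|=r}\Phi(\psi)$, and taking the infimum over $\gamma$ yields $c\geq\inf_{\|\psi\|=r}\Phi(\psi)>\Phi(0)>\Phi(\varphi)$.

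Next, suppose for contradiction that $c$ is not a critical value of $\Phi$. Combining this assumption with the Palais--Smale condition at level $c$, one shows that there exist $\epsilon_0>0$ and $\delta>0$ such that $\|\Phi'(u)\|_{F^\ast}\geq\delta$ for every $u$ in the slab $A:=\{u\in F:|\Phi(u)-c|\leq\epsilon_0\}$. If no such $\delta$ existed, one could produce a Palais--Smale sequence at level $c$, extract a convergent subsequence by the Palais--Smale hypothesis, and obtain a critical point at level $c$, contradicting the supposition.

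The technical core, and the main obstacle, is to build a deformation decreasing the energy across $c$. For this I would invoke the pseudo-gradient construction: there is a locally Lipschitz map $V:\{u:\Phi'(u)\neq 0\}\to F$ satisfying $\|V(u)\|\leq 2\|\Phi'(u)\|_{F^\ast}$ and $\langle\Phi'(u),V(u)\rangle\geq\|\Phi'(u)\|_{F^\ast}^{2}$. Fix $\epsilon\in(0,\epsilon_0)$ small enough that $\epsilon<\tfrac{1}{2}\bigl(c-\max\{\Phi(0),\Phi(\varphi)\}\bigr)$, cut $V$ off with a Urysohn-type function supported in $A$ and equal to $1$ on the thinner slab $\{|\Phi-c|\leq\epsilon\}$, and integrate the resulting bounded vector field. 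This yields a continuous flow $\eta:[0,1]\times F\to F$ with $\eta(0,\cdot)=\mathrm{id}_F$, $\eta(s,u)=u$ whenever $u\notin A$, and, after choosing the time scaling, $\eta\bigl(1,\{\Phi\leq c+\epsilon\}\bigr)\subset\{\Phi\leq c-\epsilon\}$. Verifying the last inclusion is the delicate point: one must show that along a trajectory staying in the thinner slab the energy decreases at rate at least $\delta^{2}$, so that it cannot spend time greater than $2\epsilon/\delta^{2}$ there, and tune the flow time accordingly.

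Finally, by the definition of $c$ as an infimum, pick $\gamma\in\Gamma$ with $\max_{s\in[0,1]}\Phi(\gamma(s))\leq c+\epsilon$. Since $\Phi(0),\Phi(\varphi)<c-\epsilon$, both endpoints lie outside $A$, so $\eta(1,0)=0$ and $\eta(1,\varphi)=\varphi$; hence $\tilde\gamma(s):=\eta(1,\gamma(s))$ belongs to $\Gamma$. By construction $\max_{s\in[0,1]}\Phi(\tilde\gamma(s))\leq c-\epsilon$, contradicting the definition of $c$. Therefore $c$ must be a critical value, completing the proof.
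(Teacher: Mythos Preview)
The paper does not prove this theorem at all: it is merely stated and attributed to Ambrosetti and Rabinowitz \cite[Theorem~2.4]{ambr}. Your argument is the standard deformation--lemma proof, and it is essentially correct and follows the same route as the original reference.

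One minor technical slip: to conclude that $\eta(1,0)=0$ and $\eta(1,\varphi)=\varphi$ you need $0$ and $\varphi$ to lie outside the slab $A=\{|\Phi-c|\le\epsilon_0\}$, which requires $\Phi(0),\Phi(\varphi)<c-\epsilon_0$, not merely $<c-\epsilon$. So you should also impose $\epsilon_0<\tfrac12\bigl(c-\max\{\Phi(0),\Phi(\varphi)\}\bigr)$ at the outset; this is harmless since shrinking $\epsilon_0$ only strengthens the lower bound $\delta$. With that adjustment the proof goes through.
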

\section{Proof of Theorem \ref{thm}}\label{s3}
In this section, we shall prove the first main result of this paper. More precisely, under suitable conditions, we shall prove that the functional energy associated with problem \eqref{p} satisfies the Mountain pass geometry. First, we shall prove several lemmas.
\begin{lemma}\label{coer}
Under hypotheses 
$(H_1)$
and $(H_2)$, 
there exist  $\rho>0$ and $\eta>0$ such that 
$ \|\varphi\|=\rho
 \Longrightarrow
J_\mu(\varphi)\geq \eta>0.$
\end{lemma}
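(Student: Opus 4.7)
The plan is to bound the two ``bad'' terms in $J_\mu(\varphi)$ from above by powers of $\|\varphi\|$ that are strictly larger than $p$, so that the leading $\frac{1}{p}\|\varphi\|^p$ dominates for small $\|\varphi\|$. First, using hypothesis $(H_1)$, namely $|h(\varphi)| \leq c_1 |\varphi|^{r-1}$ with $r \in (p, p^\ast)$, I would observe that $|h(\varphi)\varphi| \leq c_1 |\varphi|^r$, and since $p \leq r < p^\ast$, the continuous embedding $E \hookrightarrow L^r(\mathbb{R}^N)$ together with \eqref{const} yields
\begin{equation*}
\mu \int_{\mathbb{R}^N} f(x)\,h(\varphi)\varphi\,dx \;\leq\; \mu\, c_1\, |f|_\infty\, |\varphi|_r^r \;\leq\; \mu\, c_1\, |f|_\infty\, S_r^{-r/p}\, \|\varphi\|^r.
\end{equation*}

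Next, for the critical Hardy--Sobolev term, I would apply \eqref{const} with $\sigma = p^\ast(\alpha)$ (the Hardy--Sobolev inequality coded in the definition of $S_\sigma$ in \eqref{Slambda}) to get
\begin{equation*}
\frac{1}{p^\ast(\alpha)}\int_{\mathbb{R}^N}|x|^{-\alpha}|\varphi|^{p^\ast(\alpha)}\,dx \;\leq\; \frac{1}{p^\ast(\alpha)}\, S_{p^\ast(\alpha)}^{-p^\ast(\alpha)/p}\, \|\varphi\|^{p^\ast(\alpha)}.
\end{equation*}
Combining the two estimates gives
\begin{equation*}
J_\mu(\varphi) \;\geq\; \frac{1}{p}\|\varphi\|^p \;-\; A\, \|\varphi\|^r \;-\; B\, \|\varphi\|^{p^\ast(\alpha)},
\end{equation*}
with $A := \mu c_1 |f|_\infty S_r^{-r/p}$ and $B := \frac{1}{p^\ast(\alpha)} S_{p^\ast(\alpha)}^{-p^\ast(\alpha)/p}$.

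Finally, I would exploit the fact that both $r > p$ (by $(H_1)$) and $p^\ast(\alpha) = \frac{p(N-\alpha)}{N-2p} > p$ (since $\alpha < 2p$ and $2p < N$). Writing $t := \|\varphi\|$ and $g(t) := \frac{1}{p}t^p - A t^r - B t^{p^\ast(\alpha)}$, we have $g(t)/t^p \to 1/p$ as $t \to 0^+$, so there exists $\rho > 0$ small enough such that $g(\rho) \geq \tfrac{1}{2p}\rho^p =: \eta > 0$. For this $\rho$, every $\varphi \in E$ with $\|\varphi\| = \rho$ satisfies $J_\mu(\varphi) \geq \eta$, which is the desired conclusion. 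There is no serious obstacle here; the only thing to check carefully is that both perturbation exponents strictly exceed $p$, which follows directly from the assumptions on $r$, $\alpha$ and $p$.
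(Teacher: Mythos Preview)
Your proposal is correct and follows essentially the same route as the paper: bound the nonlinear term via $(H_1)$ and the embedding \eqref{const}, bound the Hardy--Sobolev term via \eqref{Slambda}, and then use $r>p$ and $p^\ast(\alpha)>p$ to make the leading $\tfrac{1}{p}\|\varphi\|^p$ dominate on a small sphere. The only cosmetic difference is that the paper starts from $H(\varphi)$ in the functional and passes through $(H_2)$ to reach $\tfrac{1}{r}h(\varphi)\varphi$ before applying $(H_1)$, whereas you apply $(H_1)$ directly to $h(\varphi)\varphi$; the resulting estimates and the choice of $\rho,\eta$ are otherwise identical.
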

\begin{proof}Let $\varphi\in E$. Invoking hypotheses
  $(H_1)$-$(H_2)$ and equation \eqref{const}, we obtain
\begin{eqnarray*}
J_\mu(\varphi)&=&\frac{1}{p}\|\varphi\|^p-\mu\int_{\mathbb{R}^N}f(x) H(\varphi)\,dx-\frac{1}{p^\ast(\alpha)}\int_{\mathbb{R}^N}|x|^{-\alpha}  u^{p^\ast(\alpha)} \,dx\\
&\geq& \frac{1}{p}\|\varphi\|^p-\frac{\mu}{r}\int_{\mathbb{R}^N}f(x)h(\varphi)\,dx-\frac{1}{p^\ast(\alpha)}S_{p^*(\alpha)}^{-\frac{p^*(\alpha)}{p}}\|\varphi\|^{p^*(\alpha)} \\
&\geq&  \frac{1}{p}\|\varphi\|^p-\frac{\mu}{r}c_1\|f\|_\infty|u|_r^r-\frac{1}{p^\ast(\alpha)}S_{p^*(\alpha)}^{-\frac{p^*(\alpha)}{p}}\|\varphi\|^{p^*(\alpha)} \\
&\geq&  \frac{1}{p}\|\varphi\|^p-\frac{\mu}{r}c_1\|f\|_\infty S_{r}^{-\frac{r}{p}}\|\varphi\|^r-\frac{1}{p^\ast(\alpha)}S_{p^*(\alpha)}^{-\frac{p^*(\alpha)}{p}}\|\varphi\|^{p^*(\alpha)}\\
&\geq&  \|\varphi\|^p\left\{\frac{1}{p}-\frac{\mu}{r}c_1\|f\|_\infty S_{r}^{-\frac{r}{p}}\|\varphi\|^{r-p}-\frac{1}{p^\ast(\alpha)}S_{p^*(\alpha)}^{-\frac{p^*(\alpha)}{p}}\|\varphi\|^{p^*(\alpha)-p}\right\},
\end{eqnarray*}
and
since  $\min(r,p^*(\alpha))>p$, we get
$$\displaystyle\lim_{\|\varphi\|\to0}\left\{\frac{1}{p}-\frac{\mu}{r}c_1\|f\|_\infty S_r^{-\frac{r}{p}}\|\varphi\|^{r-p}-\frac{1}{p^\ast(\alpha)}S_{p^*(\alpha)}^{-\frac{p^*(\alpha)}{p}}\|\varphi\|^{p^*(\alpha)-p}\right\}=\frac{1}{p}>0,$$ 
therefore, for $\rho>0$ small enough, if $ \|\varphi\|=\rho,$   we obtain
$$\eta:=\rho^p\left(\frac{1}{p}-\frac{\mu}{r}c_1\|f\|_\infty S_r^{-\frac{r}{p}}\rho^{r-p}-\frac{1}{p^\ast(\alpha)}S_{p^*(\alpha)}^{-\frac{p^*(\alpha)}{p}}\rho^{p^*(\alpha)-p}\right)>0,$$
thus
$\|\varphi\|=\rho \Longrightarrow J_\mu(\varphi)\geq \eta>0.$
This completes the proof of Lemma \ref{coer}.
\end{proof}
\begin{lemma}\label{lem2.3}Under  hypotheses of Lemma \ref{coer}, there exists $e\in E$ such that 
$\|e\|>\rho$
\
and 
\
$J_\mu(e)<0.$
\end{lemma}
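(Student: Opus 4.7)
The plan is to locate $e$ along a ray $e = t\varphi_0$ for a fixed nontrivial $\varphi_0 \in E$ and $t > 0$ large, exploiting the fact that the critical Hardy--Sobolev exponent $p^\ast(\alpha)$ satisfies $p^\ast(\alpha) > p$ (since $\alpha < 2p$ and $p < N/2$), so the last term of $J_\mu$ dominates and drives $J_\mu(t\varphi_0) \to -\infty$ as $t \to \infty$. Concretely, I would fix $\varphi_0 \in C_c^\infty(\mathbb{R}^N) \setminus \{0\}$ (compact support is for technical convenience in bounding the nonlinear term) and write
\begin{equation*}
J_\mu(t\varphi_0) = \frac{t^p}{p}\|\varphi_0\|^p - \mu \int_{\mathbb{R}^N} f(x)\, H(t\varphi_0)\,dx - \frac{t^{p^\ast(\alpha)}}{p^\ast(\alpha)} \int_{\mathbb{R}^N} |x|^{-\alpha} |\varphi_0|^{p^\ast(\alpha)}\,dx.
\end{equation*}

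The only step requiring care is ensuring that the middle term does not tend to $+\infty$ too rapidly. By $(H_2)$, $H(\varphi) > 0$ for $|\varphi| \geq \sigma$, and since $H$ is continuous with $H(0)=0$, $H$ is bounded on $[-\sigma,\sigma]$; hence $H \geq -M$ on $\mathbb{R}$ for some $M \geq 0$. The integrand $f(x)H(t\varphi_0)$ vanishes outside $\mathrm{supp}(\varphi_0)$, so together with $f \in L^\infty(\mathbb{R}^N)$ I obtain
\begin{equation*}
-\mu \int_{\mathbb{R}^N} f(x)\, H(t\varphi_0)\,dx \leq \mu M \|f\|_\infty \,|\mathrm{supp}(\varphi_0)| =: C_0,
\end{equation*}
a constant independent of $t$.

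Combining these estimates yields $J_\mu(t\varphi_0) \leq \tfrac{t^p}{p}\|\varphi_0\|^p + C_0 - \tfrac{t^{p^\ast(\alpha)}}{p^\ast(\alpha)} \int_{\mathbb{R}^N} |x|^{-\alpha}|\varphi_0|^{p^\ast(\alpha)}\,dx$. Since $\varphi_0 \not\equiv 0$, the Hardy--Sobolev integral appearing with the negative sign is strictly positive (and finite by the embedding recalled in Section~\ref{s2}), and because $p^\ast(\alpha) > p$ the right-hand side tends to $-\infty$ as $t \to \infty$. I would then pick $t^\ast>0$ large enough so that both $\|t^\ast \varphi_0\| = t^\ast \|\varphi_0\| > \rho$ (with $\rho$ from Lemma~\ref{coer}) and $J_\mu(t^\ast\varphi_0) < 0$ hold, and set $e := t^\ast \varphi_0$. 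The only substantive obstacle is the control of the middle term, which is precisely the role of the Ambrosetti--Rabinowitz-type hypothesis $(H_2)$.
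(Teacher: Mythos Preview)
Your proof is correct and follows essentially the same ray argument as the paper's: fix a nonzero $\varphi_0\in C_c^\infty(\mathbb{R}^N)$, scale by $t>0$, and use $p^\ast(\alpha)>p$ to send $J_\mu(t\varphi_0)\to-\infty$, then choose $t$ large. The only difference is in the middle term: the paper takes $\varphi_0\ge 0$ and simply drops $-\mu\int f\,H(s\varphi_0)\,dx$ (tacitly treating $\int f\,H(s\varphi_0)\,dx\ge 0$), whereas you bound it above by a constant $C_0$ via $H\ge -M$ and compact support---your treatment is slightly more careful but the argument is the same.
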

\begin{proof}
Let $\varphi$ be a positive function in $\mathcal{C}_c^\infty(E)$. Then for every $s>0$ we have 
\begin{eqnarray*}
 J_\mu(s\varphi)&=&\frac{s^p}{p}\|\varphi\|^p-\mu\int_{\mathbb{R}^N}f(x) H(s\varphi)\,dx-\frac{s^{p^\ast(\alpha)}}{p^\ast(\alpha)}  
                   \int_{\mathbb{R}^N}|x|^{-\alpha}\varphi^{p^\ast(\alpha)} \,dx\\
                &\le&\frac{s^p}{p}\|\varphi\|^p-\frac{s^{p^\ast(\alpha)}}{p^\ast(\alpha)}  
                   \int_{\mathbb{R}^N}|x|^{-\alpha}\varphi^{p^\ast(\alpha)} \,dx.
\end{eqnarray*}
Since $p<p^\ast(\alpha),$ it follows that  $J_\mu(s\varphi)\rightarrow-\infty,$ as $s\rightarrow\infty$. Therefore there exists $s_0>\frac{\rho}{\|\varphi\|}$ large enough, such that  $J_\mu(s_0\varphi)<0$. If we now set $e=s_0\varphi,$ then $\|e\|>\rho$ and $J_\mu(e)<0$. 
This completes the proof of Lemma \ref{lem2.3}.
\end{proof}

\begin{lemma}\label{lem2.4}Under  hypotheses
 of Lemma \ref{coer}, $J_\mu$ satisfies the Palais-Smale condition.
\end{lemma}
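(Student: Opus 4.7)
The plan is to follow the standard three-step Palais--Smale strategy. Let $\{\varphi_n\}\subset E$ be any PS sequence, so $J_\mu(\varphi_n)\to c$ for some $c\in\mathbb{R}$ and $J'_\mu(\varphi_n)\to 0$ in $E^*$. First I would establish that $\{\varphi_n\}$ is uniformly bounded in $E$. Set $\theta:=\min\{r,p^*(\alpha)\}$, which is $>p$ since $r\in(p,p^*)$ and $p^*(\alpha)>p$, and consider the combination $J_\mu(\varphi_n)-\theta^{-1}\langle J'_\mu(\varphi_n),\varphi_n\rangle$. The coefficient of $\|\varphi_n\|^p$ is $\tfrac{1}{p}-\tfrac{1}{\theta}>0$; the coefficient of $\int|x|^{-\alpha}|\varphi_n|^{p^*(\alpha)}\,dx$ is $\tfrac{1}{\theta}-\tfrac{1}{p^*(\alpha)}\geq 0$; and hypothesis $(H_2)$, which is available with any exponent $\leq r$ hence with $\theta$, forces $\tfrac{1}{\theta}h(\varphi_n)\varphi_n-H(\varphi_n)\geq 0$ on $\{|\varphi_n|\geq\sigma\}$. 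On the complementary region $\{|\varphi_n|<\sigma\}$, the bound $|h(\varphi_n)\varphi_n|\leq c_1|\varphi_n|^r$ from $(H_1)$ together with $f\in L^\infty(\mathbb{R}^N)$ contributes only a fixed additive constant. Since the left-hand side is majorised by $|c|+1+o(1)\|\varphi_n\|$ from the PS hypothesis, we conclude that $\|\varphi_n\|$ is bounded.

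After extracting a subsequence, one has $\varphi_n\rightharpoonup\varphi$ weakly in $E$, $\varphi_n\to\varphi$ strongly in $L^\sigma_{loc}(\mathbb{R}^N)$ for every $\sigma\in[p,p^*)$, and $\varphi_n\to\varphi$ pointwise a.e. I would then test $\langle J'_\mu(\varphi_n),\psi\rangle\to 0$ against arbitrary $\psi\in\mathcal{C}_c^\infty(\mathbb{R}^N)$ and pass to the limit term by term: the polyharmonic and $p$-Laplacian pieces of $\Lambda(\varphi_n,\psi)$ by weak convergence of $\Delta\varphi_n$ and $\nabla\varphi_n$; the Hardy term by the Rellich inequality \eqref{rellich} applied on $\mathrm{supp}(\psi)$ combined with local compactness; the subcritical term $\mu\int f\,h(\varphi_n)\psi$ by the locally compact embedding and $(H_1)$; and the critical term by a.e.\ convergence together with boundedness of $\{|\varphi_n|^{p^*(\alpha)-2}\varphi_n\}$ in the dual weighted space. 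This identifies $\varphi$ as a weak solution of \eqref{p}.

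The main obstacle is upgrading the weak convergence to strong convergence in $E$, because $p^*(\alpha)$ is critical for the embedding $E\hookrightarrow L^{p^*(\alpha)}(\mathbb{R}^N,|x|^{-\alpha}dx)$ and that embedding fails to be compact. I would set $w_n:=\varphi_n-\varphi$ and apply the Brezis--Lieb lemma both to $\|\cdot\|^p$ (using a.e.\ convergence of $\Delta\varphi_n$, $\nabla\varphi_n$ and $\varphi_n$, together with Fatou for the Hardy weight) and to $\int|x|^{-\alpha}|\cdot|^{p^*(\alpha)}\,dx$, producing
\begin{equation*}
J_\mu(\varphi_n)=J_\mu(\varphi)+\frac{1}{p}\|w_n\|^p-\frac{1}{p^*(\alpha)}\int_{\mathbb{R}^N}|x|^{-\alpha}|w_n|^{p^*(\alpha)}\,dx+o(1),
\end{equation*}
and an analogous splitting of $\langle J'_\mu(\varphi_n),\varphi_n\rangle$; the subcritical remainder $\mu\int f\,h(\varphi_n)\varphi_n$ converges to $\mu\int f\,h(\varphi)\varphi$ using $(H_1)$, $f\in L^\infty$, local compactness, and a tightness estimate at infinity coming from the bound $\|\varphi_n\|\leq C$. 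Combined with the PS hypothesis these two identities force $\ell:=\lim_n\|w_n\|^p=\lim_n\int_{\mathbb{R}^N}|x|^{-\alpha}|w_n|^{p^*(\alpha)}\,dx$, and definition \eqref{Slambda} then yields the dichotomy $\ell=0$ (whence $\varphi_n\to\varphi$ strongly, completing the proof) or $\ell\geq S_{p^*(\alpha)}^{(N-\alpha)/(2p-\alpha)}$. Excluding the second alternative is the crux and will require the threshold inequality $c<J_\mu(\varphi)+\bigl(\tfrac{1}{p}-\tfrac{1}{p^*(\alpha)}\bigr)S_{p^*(\alpha)}^{(N-\alpha)/(2p-\alpha)}$, for which the smallness of $\lambda$ ensured by \eqref{lamb} (which makes the norm $\|\cdot\|$ equivalent to a clean Rellich-type norm with a controlled Hardy-Sobolev constant) together with the AR-type control of $J_\mu(\varphi)$ coming from $(H_2)$ will be decisive.
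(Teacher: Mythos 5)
Your overall architecture coincides with the paper's: boundedness via the combination $\theta J_\mu(\varphi_n)-\langle J'_\mu(\varphi_n),\varphi_n\rangle$ with $\theta=\min(r,p^*(\alpha))>p$, extraction of a weak limit which is a critical point, the Brezis--Lieb splitting for $w_n=\varphi_n-\varphi$, and the dichotomy $\ell=0$ or $\ell\ge S_{p^*(\alpha)}^{p/(p^*(\alpha)-p)}$ coming from \eqref{Slambda}. The genuine gap is that you stop exactly where the argument must be closed: you say that excluding $\ell>0$ ``will require'' the threshold inequality $c<J_\mu(\varphi)+\bigl(\tfrac1p-\tfrac{1}{p^*(\alpha)}\bigr)S_{p^*(\alpha)}^{p/(p^*(\alpha)-p)}$ and that the control from $(H_2)$ ``will be decisive'', but you never derive the contradiction. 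The paper's closing move is concrete: from the identity $c-J_\mu(\varphi)=\bigl(\tfrac1p-\tfrac{1}{p^*(\alpha)}\bigr)\ell$ and the threshold bound one gets $J_\mu(\varphi)<0$; on the other hand, testing $J'_\mu(\varphi)=0$ against $\varphi$ and using $rH(\varphi)\le h(\varphi)\varphi$ from $(H_2)$ gives
\begin{equation*}
J_\mu(\varphi)\ \ge\ \mu\Bigl(\frac rp-1\Bigr)\int_{\mathbb{R}^N} f(x)H(\varphi)\,dx+\Bigl(\frac1p-\frac{1}{p^*(\alpha)}\Bigr)\int_{\mathbb{R}^N}|x|^{-\alpha}|\varphi|^{p^*(\alpha)}\,dx\ \ge\ 0,
\end{equation*}
since $r>p$ and $p<p^*(\alpha)$; the two conclusions are incompatible, forcing $\ell=0$ and hence strong convergence. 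Without this (or some substitute) your proof does not conclude. To be fair, the threshold bound on $c$ is itself asserted rather than proved in the paper, so your instinct that this is the delicate point is sound --- but a complete proof must still carry out the deduction from it.

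A secondary inaccuracy: in the boundedness step you claim that the region $\{|\varphi_n|<\sigma\}$, where $(H_2)$ is unavailable, ``contributes only a fixed additive constant''. With $f\in L^\infty(\mathbb{R}^N)$ only, the integral of $f|h(\varphi_n)\varphi_n|$ over that (possibly unbounded) region is not bounded by a constant; the natural estimate is $|h(\varphi_n)\varphi_n|\le c_1|\varphi_n|^r\le c_1\sigma^{r-p}|\varphi_n|^p$ there, which yields a term of order $\mu C\|\varphi_n\|^p$ --- the same order as the leading coercive term. This still gives boundedness when the resulting coefficient $\tfrac1p-\tfrac1\theta-\mu C$ stays positive, but it is not the harmless constant you assert. (The paper sidesteps the issue by applying $(H_2)$ as if it held globally, which is its own imprecision.)
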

\begin{proof}
Let $\{\varphi_{n}\}$ be a Palais-Smale sequence, which means that  $J_\mu(\varphi_n)$
  is bounded and 
  $ J'_\mu(\varphi_n)\rightarrow0, $
   as
   $ n\rightarrow\infty.$
 Therefore there exist $m_1>0$ and $m_2>0$ such that  
$J_\mu(\varphi_n)\leq m_1$
 and
 $ |J'_\mu(\varphi_n)|\le m_2.$
Letting $\theta:=\min(r,p^*(\alpha))$, we get by hypothesis
 $(H_1)$ that 
\begin{eqnarray*}
 \theta m_1+m_2&\ge&\theta J_\mu(\varphi_n)-\langle J'_\mu(\varphi_n), \varphi_n\rangle\\
               &\ge&\frac{\theta}{p}\|\varphi_n\|^p-\mu\theta\int_{\mathbb{R}^N}f(x) H(\varphi_n)\,dx-\frac{\theta}{p^\ast(\alpha)}  
                   \int_{\mathbb{R}^N}|x|^{-\alpha}|\varphi_n|^{p^\ast(\alpha)} \,dx\\
                   &&-\|\varphi_n\|^p+\mu\int_{\mathbb{R}^N}f(x) h(\varphi_n)\varphi_n\,dx+ \int_{\mathbb{R}^N}|x|^{-\alpha}\varphi_n^{p^\ast(\alpha)}\,dx\\
                   &\ge& (\frac{\theta}{p}-1)\|\varphi_n\|^p+\mu(r-\theta)\int_{\mathbb{R}^N}f(x) H(\varphi_n)\,dx\\
                   &+&
                   (1-\frac{\theta}{p^\ast(\alpha)})\int_{\mathbb{R}^N}|x|^{-\alpha}|\varphi_n|^{p^\ast(\alpha)} \,dx
                  \ge (\frac{\theta}{p}-1)\|\varphi_n\|^p,
\end{eqnarray*}
and since $\theta=\min(r,p^*(\alpha))>p$, it follows that the sequence $\{\varphi_n\}$ is bounded in $E$. Therefore $($up to a subsequence$)$  there exists $\varphi\in E$ such that
$$\left\{
\begin{array}{lll}
  \varphi_n&\rightharpoonup& \varphi     \mbox{ weakly in } E,\\
  \varphi_n&\longrightarrow& \varphi      \mbox{ strongly in } L^r(\mathbb{R}^N),\\
  \varphi_n&\longrightarrow& \varphi     \mbox{ a.e. in } \mathbb{R}^n,
\end{array}\right.$$
so, by  $(H_1)-(H_2)$ and the Dominated convergence theorem, 
\begin{equation}
 \label{o(1)}
 \lim_{n\to\infty}\int_{\mathbb{R}^N} f(x) H(\varphi_n)\,dx=\int_{\mathbb{R}^N} f(x) H(\varphi)\,dx.
\end{equation}
One can now show by a standard argument that the weak limit $u$ of $\{\varphi_n\}$ is a critical point of $J_\mu$ and thus $J'_\mu(\varphi)=0$.

Let $w_n:=\varphi_n-\varphi$. Then $w_n$ converges weakly to zero. Moreover, by {\sc Brezis and Lieb} \cite[Lemma 3]{ BrezisLieb}, we get 
$$|w_n|_{p^*(\alpha)}^{p^*(\alpha)}=|\varphi_n|_{p^*(\alpha)}^{p^*(\alpha)}-|\varphi|_{p^*(\alpha)}^{p^*(\alpha)}+ o(1),$$
therefore
$$\lim_{n\to\infty}\int_{\mathbb{R}^N}|x|^{-\alpha}|\varphi_n|^{p^\ast(\alpha)} -|x|^{-\alpha}|w_n|^{p^\ast(\alpha)}\,dx=\int_{\mathbb{R}^N}|x|^{-\alpha}|\varphi|^{p^\ast(\alpha)}\,dx,$$
and from \eqref{o(1)} we have
 $$\langle J'_\mu(\varphi_n),\varphi_n\rangle-\langle J'_\mu(\varphi), \varphi\rangle=\|w_n\|^p-\int_{\mathbb{R}^N}|x|^{-\alpha}|w_n|^{p^\ast(\alpha)}\,dx+o(1),$$
hence for $n$ large enough,  
\begin{equation*}
 \|w_n\|^p=\int_{\mathbb{R}^N}|x|^{-\alpha}|w_n|^{p^\ast(\alpha)}\,dx+o(1),
\end{equation*}
thus
\begin{equation}
 \label{l}
\lim_{n\to\infty}\|w_n\|^p=\lim_{n\to\infty}\int_{\mathbb{R}^N}|x|^{-\alpha}|w_n|^{p^\ast(\alpha)}=l\ge0.
\end{equation}
If $l>0$,  then by combining equation  \eqref{Slambda} with \eqref{l} we obtain 
\begin{equation}
 \label{lS}
 l\ge S_{p^*(\alpha)}^\frac{p}{p^*(\alpha)-p}.
\end{equation}

On the other hand, one has 
$$J_\mu(\varphi_n)-J_\mu(\varphi)=\frac1p\|w_n\|^p-\frac{1}{p^*(\alpha)}\int_{\mathbb{R}^N}|x|^{-\alpha}|w_n|^{p^\ast(\alpha)}\,dx+o(1),$$
so by letting $n$ tend to infinity, we get 
$$c-J_\mu(\varphi)=(\frac1p-\frac{1}{p^*(\alpha)})l,$$
and using the last equation and \eqref{lS} we obtain 
$$J_\mu(\varphi)+(\frac1p-\frac{1}{p^*(\alpha)})l=c<(\frac1p-\frac{1}{p^*(\alpha)})S_\lambda^\frac{p}{p^*(\alpha)-p},$$
which implies that 
\begin{equation}
 \label{l>0}
 J_\mu(\varphi)<0.
\end{equation}

However,  we have $\langle J'_\mu(\varphi), \varphi\rangle=0$, for every $\varphi\in E$. So, from $(H_2)$ we get 
\begin{eqnarray*}
 \|\varphi\|^p&=& \mu\int_{\mathbb{R}^N} f(x)h(\varphi)\varphi\,dx+\int_{\mathbb{R}^N} |x|^{-\alpha}|\varphi|^{p^*(\alpha)}\,dx.\\
        &\ge&r\mu\int_{\mathbb{R}^N} f(x)H(\varphi)\,dx+\int_{\mathbb{R}^N} |x|^{-\alpha}|\varphi|^{p^*(\alpha)}\,dx,
\end{eqnarray*}
therefore 
\begin{eqnarray*}
 J_\mu(\varphi)&= \frac1p\|\varphi\|^p-\mu\int_{\mathbb{R}^N} f(x)H(\varphi)\,dx-\frac{1}{p^*(\alpha)}\int_{\mathbb{R}^N} |x|^{-\alpha}|u|^{p^*(\alpha)}\,dx\\
         &\ge\frac1p\left(r\mu\int_{\mathbb{R}^N} f(x)H(\varphi)\,dx+\int_{\mathbb{R}^N} |x|^{-\alpha}|u|^{p^*(\alpha)}\,dx\right)\\  
              &- \mu\int_{\mathbb{R}^N} f(x)H(\varphi)\,dx-\frac{1}{p^*(\alpha)}\int_{\mathbb{R}^N} |x|^{-\alpha}|u|^{p^*(\alpha)}\,dx\\
         \ge&\mu(\frac rp-1)\int_{\mathbb{R}^N} f(x)H(\varphi)\,dx+(\frac1p-\frac{1}{p^*(\alpha)})\int_{\mathbb{R}^N} |x|^{-\alpha}|u|^{p^*(\alpha)}\,dx,
\end{eqnarray*}
and since $r\in(p,p^*)$ and $p<p^*(\alpha)$, it follows that $J_\mu(\varphi)\ge0$. This is in contradiction with  \eqref{l>0}. Since $l=0,$ we see by \eqref{l} 
that $\{\varphi_n\}$ converges strongly to $\varphi$ in $E$.
This completes the proof of Lemma \ref{lem2.4}.
\end{proof}

{\bf Proof of Theorem \ref{thm} }
By  Lemma \ref{coer},  there exist $\rho\in(0,\infty)$ and $\eta\in(0,\infty)$ such that
$\inf_{\|\varphi\|=\rho}J_\mu(\varphi)\ge\eta>0.$
On the other hand, by  Lemma \ref{lem2.3},  there exists $e\in E$ such that
$$\rho\leq\|e\|
\
\hbox{and}
\
  J_\mu(e)<0< \inf_{\|\varphi\|=\rho}J_\mu(\varphi),$$
hence, combining Lemma \ref{lem2.4} and Theorem \ref{th1},  we can establish the existence of a critical point $\varphi_\mu$.  

Moreover,  $\varphi_\mu$ is characterized by 
$$J_\mu(\varphi_\mu)=\inf_{\gamma\in\Gamma}\max_{t\in[0,1]}J_\mu(\gamma(t)),$$
 where
$$\Gamma:=\{\gamma\in C([0,1],X): (\gamma(0),\gamma(1))=(0,e)\},$$
so if we take $\gamma(s)=se$, then there exists $s_0\in[0,1]$ such that $\|s_0 e\|=\rho,$
hence invoking Lemma \ref{lem2.3}, we obtain
\begin{equation}
 \label{1sol}
 J_\mu(\varphi_\mu)\ge\eta>0.
\end{equation}
This completes the proof of Theorem \ref{thm}.\qed

\section{Proof of Theorem \ref{thmm}} \label{s4}

In this section, we shall prove Theorem \ref{thmm}. The proof 
will be divided into several lemmas.
\begin{lemma}\label{lem3.1}
Under hypotheses $(H_2)$
and
$(H_3)$, there exist  positive constants $\mu_0$, $\rho,$ and $\eta$ such that for every $\mu\in(0,\mu_0),$
$ \|\varphi\|=\rho
\Longrightarrow
J_\mu(\varphi)\geq \eta>0.$
\end{lemma}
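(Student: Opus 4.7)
The plan is to mirror the structure of the proof of Lemma \ref{coer}, but adapting the factorization to account for the fact that under $(H_3)$ we now have $1<r<p$, so the sublinear term $\mu \int f(x)H(\varphi)\,dx$ is the one that threatens positivity near zero rather than being automatically small. The starting estimate is the same: from $|h(\varphi)|\le c_1|\varphi|^{r-1}$ we get $|H(\varphi)|\le (c_1/r)|\varphi|^r$, and then using the embedding estimate \eqref{esti} (which is precisely the $(H_3)$ version of the Sobolev bound, due to Dhifli--Alsaedi) together with \eqref{const} applied to the critical Hardy--Sobolev term,
\[
J_\mu(\varphi)\ge \frac{1}{p}\|\varphi\|^p-\frac{\mu c_1}{r}S_{p^\ast}^{-r/p}|f|_{p^\ast/(p^\ast-r)}\|\varphi\|^{r}-\frac{1}{p^\ast(\alpha)}S_{p^\ast(\alpha)}^{-p^\ast(\alpha)/p}\|\varphi\|^{p^\ast(\alpha)}.
\]

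Next, I would factor out $\|\varphi\|^r$ (rather than $\|\varphi\|^p$ as in Lemma \ref{coer}), obtaining, for $\|\varphi\|=\rho$,
\[
J_\mu(\varphi)\ge\rho^{r}\Bigl[g(\rho)-\tfrac{\mu c_1}{r}S_{p^\ast}^{-r/p}|f|_{p^\ast/(p^\ast-r)}\Bigr],\qquad g(\rho):=\tfrac{1}{p}\rho^{p-r}-\tfrac{1}{p^\ast(\alpha)}S_{p^\ast(\alpha)}^{-p^\ast(\alpha)/p}\rho^{p^\ast(\alpha)-r}.
\]
Since $r<p<p^\ast(\alpha)$, the exponents $p-r$ and $p^\ast(\alpha)-r$ are both positive with the first strictly smaller than the second, so $g(0)=0$, $g(\rho)\to-\infty$ as $\rho\to\infty$, and an elementary calculation of $g'$ shows $g$ attains a strictly positive maximum $M:=g(\rho^\ast)>0$ at an explicit value $\rho^\ast>0$.

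Fixing $\rho:=\rho^\ast$, the remaining step is to absorb the $\mu$-term. I would set
\[
\mu_0:=\frac{r M}{c_1 S_{p^\ast}^{-r/p}|f|_{p^\ast/(p^\ast-r)}},
\]
so that for every $\mu\in(0,\mu_0)$ the bracket is bounded below by a positive constant, and consequently
\[
\eta:=\rho^{r}\Bigl(M-\tfrac{\mu c_1}{r}S_{p^\ast}^{-r/p}|f|_{p^\ast/(p^\ast-r)}\Bigr)>0
\]
gives the claimed inequality on the sphere $\{\|\varphi\|=\rho\}$. The only subtlety (and the only place where the argument genuinely differs from Lemma \ref{coer}) is the need to invoke \eqref{esti} rather than the plain $L^\infty$ bound on $f$, which is exactly what $(H_3)$ is set up to provide; hypothesis $(H_2)$ plays no role in this lemma and is presumably listed for consistency with the remainder of Section \ref{s4}. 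No real obstacle arises beyond checking that the constants work out, so the proof is essentially algebraic once the correct factorization is chosen.
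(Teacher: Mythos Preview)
Your proof is correct and follows essentially the same route as the paper: both obtain the same lower bound from \eqref{esti} and \eqref{const}, factor out $\|\varphi\|^r$, and choose $\mu_0$ so that the auxiliary one-variable function (your $g$, the paper's $h$) stays positive at its maximizer. Your remark that $(H_2)$ is not actually needed here is accurate---the bound $|H(\varphi)|\le (c_1/r)|\varphi|^r$ follows from $(H_3)$ alone---and your direct choice $\rho=\rho^\ast$ is cleaner than the paper's appeal to continuity.
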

\begin{proof}Let $\varphi\in E$. 
Invoking 
hypotheses
  $(H_2)$-$(H_3)$, equations \eqref{const},  \eqref{esti} and the H\"older inequality, we obtain
\begin{eqnarray}\label{999}
J_\mu(\varphi)&=&\frac{1}{p}\|\varphi\|^p-\mu\int_{\mathbb{R}^N}f(x) H(\varphi)\,dx-\frac{1}{p^\ast(\alpha)}\int_{\mathbb{R}^N}|x|^{-\alpha}  \varphi^{p^\ast(\alpha)} \,dx\nonumber\\
&\geq& \frac{1}{p}\|\varphi\|^p-\frac{\mu}{r}\int_{\mathbb{R}^N}f(x)h(\varphi)\,dx-\frac{1}{p^\ast(\alpha)}S_{p^\ast}^{-\frac{p^*(\alpha)}{p}}\|\varphi\|^{p^*(\alpha)} \nonumber\\
&\geq&  \frac{1}{p}\|\varphi\|^p-\frac{\mu}{r}c_1\|\varphi\|^r_{r,f}-\frac{1}{p^\ast(\alpha)}S_{p^\ast}^{-\frac{p^*(\alpha)}{p}}\|\varphi\|^{p^*(\alpha)} \\
&\geq&  \frac{1}{p}\|\varphi\|^p-\frac{\mu}{r}c_1\|f\|_\frac{p^*}{p^*-r}S_{p^\ast}^{-\frac{r}{p}}\|\varphi\|^r-\frac{1}{p^\ast(\alpha)}S_{p^\ast}^{-\frac{p^*(\alpha)}{p}}\|\varphi\|^{p^*(\alpha)}\nonumber\\
&\geq&  \|\varphi\|^r\left(\frac{1}{p}\|\varphi\|^{p-r}-\frac{\mu}{r}c_1\|f\|_\frac{p^*}{p^*-r}S_{p^\ast}^{-\frac{r}{p}}-\frac{1}{p^\ast(\alpha)}S_{p^\ast}^{-\frac{p^*(\alpha)}{p}}\|\varphi\|^{p^*(\alpha)-r}\right)\nonumber\\
&\geq&\|\varphi\|^r\left(h\bigl(\|\varphi\|\bigr)-\frac{\mu}{r}c_1\|f\|_\frac{p^*}{p^*-r}S_{p^\ast}^{-\frac{r}{p}}\right),\nonumber
\end{eqnarray}
where $$h(s):=\frac{1}{p}s^{p-r}-\frac{1}{p^\ast(\alpha)}S_{p^\ast}^{-\frac{p^*(\alpha)}{p}}s^{p^*(\alpha)-r}.$$

It is not difficult to prove that $h$ attains its global maximum at
$$s_0:=\left(\frac{p^*(\alpha)(p-r)S_{p^\ast}^{\frac{p^*(\alpha)}{p}}}{p\bigl(p^*(\alpha)-r\bigr)} \right)^\frac{1}{p^*-p}.$$
Set
\begin{equation}
 \label{mu0}
 \mu_0:=\frac{rf(s_0)}{c_1\|f\|_\frac{p^*}{p^*-r}S_{p^\ast}^{-\frac{r}{p}}}.
\end{equation}
Then for every $\mu\in(0,\mu_0)$, we have $$h(s_0)-\frac{\mu}{r}c_1\|f\|_\frac{p^*}{p^*-r}S_{p^\ast}^{-\frac{r}{p}}>0,$$
and since $h$ is continuous,  we can find $\rho>0$ such that $$h(\rho)-\frac{\mu}{r}c_1\|f\|_\frac{p^*}{p^*-r}S_{p^\ast}^{-\frac{r}{p}}>0,$$
thus for every $\varphi\in E$ with $\|\varphi\|=\rho$,  we have
$$J_\mu(\varphi)\ge\eta:=\rho^r\left(h(\rho)-\frac{\mu}{r}c_1\|f\|_\frac{p^*}{p^*-r}S_{p^\ast}^{-\frac{r}{p}}\right)>0.$$
This completes the proof of Lemma \ref{lem3.1}.
\end{proof}
\begin{lemma}\label{lem3.2}
There exists $e\in E$ 
such that $\|e\|>\rho$ and $J_\mu(e)<0$.
\end{lemma}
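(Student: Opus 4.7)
My plan is to follow the strategy of Lemma \ref{lem2.3} very closely. I would fix a positive, nontrivial test function $\varphi \in C_c^\infty(\mathbb{R}^N)$ (chosen so that $\int_{\mathbb{R}^N} |x|^{-\alpha}|\varphi|^{p^*(\alpha)}\,dx$ is strictly positive and finite, which is automatic since $\alpha < 2p < N$) and examine $J_\mu(s\varphi)$ as $s \to \infty$. The key observation is that, even under the new hypothesis $(H_3)$ with $1 < r < p$, the critical Hardy--Sobolev term still carries the largest exponent $p^*(\alpha) > p > r$, so it dictates the asymptotic behavior.

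The one technical step is to bound the middle $H$-term, which in Lemma \ref{lem2.3} was simply discarded by nonnegativity. Hypothesis $(H_3)$ gives $|h(t)| \leq c_1 |t|^{r-1}$, which after integration yields the pointwise bound $|H(t)| \leq (c_1/r)|t|^r$. Substituting $t = s\varphi$ and using that $f \in L^{p^*/(p^*-r)}(\mathbb{R}^N)$ together with the compact support of $\varphi$ (so $\int f|\varphi|^r\,dx < \infty$), I would obtain
$$J_\mu(s\varphi) \leq \frac{s^p}{p}\|\varphi\|^p + \frac{\mu c_1 s^r}{r}\int_{\mathbb{R}^N} f(x)|\varphi|^r \, dx - \frac{s^{p^*(\alpha)}}{p^*(\alpha)}\int_{\mathbb{R}^N} |x|^{-\alpha}|\varphi|^{p^*(\alpha)}\, dx.$$
Since $p^*(\alpha) > \max(p, r)$, the last (negative) term dominates as $s \to \infty$, forcing $J_\mu(s\varphi) \to -\infty$. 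I can therefore pick $s_0 > \rho/\|\varphi\|$ large enough that $J_\mu(s_0\varphi) < 0$, and then $e := s_0 \varphi$ satisfies $\|e\| > \rho$ and $J_\mu(e) < 0$.

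I do not anticipate any serious obstacle: the argument is essentially identical to that of Lemma \ref{lem2.3}, the only substantive difference being that $(H_1)$ has been replaced by $(H_3)$, which now yields only a power $s^r$ on the subcritical term instead of an $s^{r'}$ with $r' > p$. Since $r < p^*(\alpha)$, this in no way affects the dominant $-s^{p^*(\alpha)}$ asymptotics, so the same conclusion holds. If anything needs the most care, it is verifying that the middle term really is of order $s^r$ (and not something worse coming from $H$), which is handled cleanly by the elementary integration of the pointwise bound in $(H_3)$.
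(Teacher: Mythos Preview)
Your proposal is correct and follows essentially the same scaling argument as the paper, which simply defers to the proof of Lemma~\ref{lem2.3}. The only minor variation is that the paper discards the $H$-term by nonnegativity (via $(H_2)$, which is still assumed in this section), whereas you control it through the growth bound $|H(t)|\le (c_1/r)|t|^r$ from $(H_3)$; since $r<p<p^\ast(\alpha)$ in either case, the critical term still dominates and the conclusion is identical.
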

\begin{proof}
Since the proof is very similar to the proof of Lemma \ref{lem2.3},  we shall omit it. 
\end{proof}
\begin{lemma}\label{lem3.3}
Under hypotheses $(H_2)$
and
$(H_3)$, the functional $J_\mu$ satisfies the Palais-Smale condition.
\end{lemma}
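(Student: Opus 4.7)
The plan is to mirror the architecture of Lemma \ref{lem2.4}, replacing the superlinear Ambrosetti--Rabinowitz step with the sublinear growth supplied by $(H_3)$. First I would fix a Palais--Smale sequence $\{\varphi_n\}\subset E$ with $|J_\mu(\varphi_n)|\le m_1$ and $\|J'_\mu(\varphi_n)\|_{E^*}\to 0$, and establish boundedness in $E$. To do this I would form the combination $p^*(\alpha)J_\mu(\varphi_n)-\langle J'_\mu(\varphi_n),\varphi_n\rangle$ (rather than $\theta J_\mu-\langle J'_\mu,\cdot\rangle$ as in Lemma \ref{lem2.4}), which cancels the critical Hardy--Sobolev term. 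Using $(H_3)$ to bound $|h(\varphi_n)\varphi_n|\le c_1|\varphi_n|^r$ and $|H(\varphi_n)|\le\frac{c_1}{r}|\varphi_n|^r$, and invoking \eqref{esti}, one controls the $f$-integrals by $C\|\varphi_n\|^r$. This produces an inequality of the form
\begin{equation*}
\Bigl(\tfrac{p^*(\alpha)}{p}-1\Bigr)\|\varphi_n\|^p\le p^*(\alpha)m_1+\|\varphi_n\|\,\|J'_\mu(\varphi_n)\|_{E^*}+C\mu\|\varphi_n\|^r,
\end{equation*}
and since $r<p<p^*(\alpha)$, boundedness follows.

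Next I would pass to a subsequence so that $\varphi_n\rightharpoonup\varphi$ in $E$, $\varphi_n\to\varphi$ almost everywhere, and \textemdash\ crucially \textemdash\ $\varphi_n\to\varphi$ strongly in $L^r(\mathbb{R}^N,f)$, exploiting the compact embedding recalled after \eqref{esti}. Combined with the pointwise growth bound in $(H_3)$ and the dominated convergence theorem, this gives
\begin{equation*}
\int_{\mathbb{R}^N}f(x)H(\varphi_n)\,dx\to\int_{\mathbb{R}^N}f(x)H(\varphi)\,dx,\quad \int_{\mathbb{R}^N}f(x)h(\varphi_n)\varphi_n\,dx\to\int_{\mathbb{R}^N}f(x)h(\varphi)\varphi\,dx,
\end{equation*}
so a standard argument shows that $\varphi$ is a critical point of $J_\mu$.

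The third step is to set $w_n:=\varphi_n-\varphi$ and apply the Brezis--Lieb lemma both to the norm $\|\cdot\|$ and to the weighted $L^{p^*(\alpha)}$ term with weight $|x|^{-\alpha}$, yielding $\|\varphi_n\|^p=\|w_n\|^p+\|\varphi\|^p+o(1)$ and the analogous splitting of the Hardy--Sobolev term. Using $J'_\mu(\varphi_n)\to 0$ and $J'_\mu(\varphi)=0$ together with the compact-embedding facts from the previous step, the $f$-pieces drop out, leaving $\|w_n\|^p=\int_{\mathbb{R}^N}|x|^{-\alpha}|w_n|^{p^*(\alpha)}\,dx+o(1)$. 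Denoting the common limit by $l\ge 0$, the Hardy--Sobolev inequality \eqref{Slambda} forces $l\ge S_{p^*(\alpha)}^{p^*(\alpha)/(p^*(\alpha)-p)}$ whenever $l>0$.

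The main obstacle is ruling out the concentration $l>0$, and this is where the sublinear regime $r<p$ forces a departure from Lemma \ref{lem2.4}. There, the Ambrosetti--Rabinowitz condition $(H_2)$ combined with $r>p$ gave $J_\mu(\varphi)\ge 0$, producing a contradiction with $J_\mu(\varphi)<0$. In the present setting, $(H_2)$ still yields $r\mu\int fH(\varphi)\le\mu\int fh(\varphi)\varphi$, but because $r/p-1<0$ the resulting identity reads
\begin{equation*}
J_\mu(\varphi)\ge\mu\Bigl(\tfrac{r}{p}-1\Bigr)\int_{\mathbb{R}^N}f(x)H(\varphi)\,dx+\Bigl(\tfrac{1}{p}-\tfrac{1}{p^*(\alpha)}\Bigr)\int_{\mathbb{R}^N}|x|^{-\alpha}|\varphi|^{p^*(\alpha)}\,dx,
\end{equation*}
whose first term is negative. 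I would control that term using $(H_3)$, \eqref{esti} and Young's inequality to absorb it into the positive Hardy--Sobolev term plus a lower bound of the form $-C\mu^{\theta}$ for some $\theta>0$. Combining with the identity $c-J_\mu(\varphi)=\bigl(\tfrac1p-\tfrac{1}{p^*(\alpha)}\bigr)l$ and the concentration lower bound on $l$, shrinking $\mu_0$ if necessary, one contradicts the assumed boundedness of $J_\mu(\varphi_n)$ at a sub-critical energy level. Hence $l=0$, which via the uniform convexity of $E$ gives $w_n\to 0$ strongly, completing the proof.
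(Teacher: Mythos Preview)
Your boundedness argument is exactly the paper's: they too form $p^*(\alpha)J_\mu(\varphi_n)-\langle J'_\mu(\varphi_n),\varphi_n\rangle$, bound the $f$-terms via $(H_3)$ and \eqref{esti}, and conclude from $r<p$. For the remainder the paper simply writes ``the rest of the proof is analogous to the proof of Lemma~\ref{lem2.4}'', so your compact-embedding and Brezis--Lieb steps are precisely what is intended.

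Where you go beyond the paper is in the concentration-exclusion step. You correctly observe that the contradiction in Lemma~\ref{lem2.4} relied on $r>p$ to obtain $J_\mu(\varphi)\ge 0$, and that this sign flips when $r<p$; the paper does not acknowledge this and just invokes the earlier lemma verbatim. Your proposed remedy --- estimating $J_\mu(\varphi)\ge -C\mu^\theta$ via $(H_3)$ and Young's inequality and then shrinking $\mu_0$ --- is the standard fix, but note two points. First, it introduces a restriction on $\mu$ (and implicitly on the Palais--Smale level $c$) that is absent from the statement of Lemma~\ref{lem3.3}; in the application this is harmless since Theorem~\ref{thmm} already assumes $\mu<\mu_0$, but strictly speaking you are proving a weaker lemma than the one stated. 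Second, the very same imprecision is already present in Lemma~\ref{lem2.4}, whose proof invokes $c<(\tfrac1p-\tfrac{1}{p^*(\alpha)})S_{p^*(\alpha)}^{p/(p^*(\alpha)-p)}$ without ever stating or verifying it. So your extra care exposes a genuine gap in the paper rather than a defect in your own argument.
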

\begin{proof}
Let $\{\varphi_{n}\}$ be a Palais-Smale sequence.  By the argument from the previous
 section, it follows that there exist $m_1>0$ and $m_2>0$,  such that  
$J_\mu(\varphi_n)\leq m_1$
 and
 $ |J'_\mu(\varphi_n)|\le m_2.$
 
Let us prove that $\{\varphi_{n}\}$ is bounded. If not, then up to a subsequence we can assume that $\|\varphi_n\|\to\infty,$ as $n\to \infty$.  By virtue of hypotheses
 $(H_2)-(H_3),$ we get
\begin{eqnarray*}
p^\ast(\alpha) m_1+m_2&\ge&p^\ast(\alpha) J_\mu(\varphi_n)-\langle J'_\mu(\varphi_n), \varphi_n\rangle\\
               &\ge&\frac{p^\ast(\alpha)}{p}\|\varphi_n\|^p-\mu p^\ast(\alpha)\int_{\mathbb{R}^N}f(x) H(\varphi_n)\,dx\\
               &&-\frac{p^\ast(\alpha)}{p^\ast(\alpha)}  
                   \int_{\mathbb{R}^N}|x|^{-\alpha}\varphi_n^{p^\ast(\alpha)} \,dx-\|\varphi_n\|^p\\
                   &&+\mu\int_{\mathbb{R}^N}f(x) h(\varphi_n)\varphi_n\,dx+ \int_{\mathbb{R}^N}|x|^{-\alpha}\varphi_n^{p^\ast(\alpha)}\,dx\\
                   &\ge&\bigl (\frac{p^\ast(\alpha)}{p}-1\bigr)\|\varphi_n\|^p+\mu(r-p^\ast(\alpha))\int_{\mathbb{R}^N}f(x) H(\varphi_n)\,dx\\
                  &\ge& \bigl(\frac{p^\ast(\alpha)}{p}-1\bigr)\|\varphi_n\|^p-\mu(p^\ast(\alpha)-r)c_1\|f\|_\frac{p^*}{p^*-r}S_{p^\ast}^{-\frac{r}{p}}\|\varphi_n\|^r.
\end{eqnarray*}
Since $r<p$,   a contradiction is obtained  by letting $n$ in the last inequality
tend
 to infinity, therefore $\{\varphi_{n}\}$ is indeed bounded. 
The rest of the proof is analogous to the proof of  Lemma \ref{lem2.4}.
This completes the proof of Lemma \ref{lem3.3}.
\end{proof}
{\bf Proof of Theorem \ref{thmm}.}  Let $\mu\in(0,\mu_0)$,  where $\mu_0$ is defined in \eqref{mu0}. Combining Lemmas \ref{lem3.1}, \ref{lem3.2}, and \ref{lem3.3} with  Theorem \ref{th1}, we can deduce that problem \eqref{p} has a weak solution $\psi_\mu$ as a
 critical point for $J_\mu$. Moreover, as in the proof of \eqref{1sol}, one has
\begin{equation}\label{mn}J_\mu(\psi_\mu)\ge\eta>0.\end{equation}
Now, by  Lemma \ref{lem3.1}, we can see that 
$\inf_{\psi\in \partial B(0,\rho)} J_\mu(\psi) >0.$
Moreover,  by  Lemma \ref{lem3.2}, and equation \eqref{999},  we get
$$-\infty <\underline{c}:=\inf_{\psi\in \overline{B(0,\rho)}}(J_\mu(\psi) )<0.$$

Let   $\varepsilon>0$  be  such that
\begin{equation}\label{epsil}0<\varepsilon <\inf_{\psi\in \partial B(0,\rho)} J_\mu(\psi) -\inf_{\psi\in B(0,\rho)}J_\mu(\psi).\end{equation}
If we consider the functional  $J_\mu :\overline{B(0,\rho)}\rightarrow \mathbb{R}$, then by the  Ekeland variational principle  there exists  $\psi_{\varepsilon }\in
\overline{B(0,\rho)}$, such that
\begin{equation}
\left\{
\begin{array}{ll}
\underline{c}\leq J_\mu (\psi_{\varepsilon })\leq \underline{c}+\varepsilon  
\\
J_\mu (\psi_{\varepsilon })<J_\mu (\psi)+\varepsilon ||\psi-\psi_{\varepsilon }||, \;\psi\neq
\psi_{\varepsilon},
\end{array}
\right.
\end{equation}
so by \eqref{epsil}, we have
\begin{equation}
J_\mu (\psi_{\varepsilon })\leq \inf_{\psi\in \overline{B(0,\rho)}} J_\mu(\psi)+\varepsilon \leq\inf_{\psi\in B(0,\rho)}J_\mu(\psi) +\varepsilon <\inf_{\psi\in\partial B(0,\rho)} J_\mu(\psi),
\end{equation}
which  implies that $\psi_{\varepsilon }\in B(0,\rho)$.

On the other hand, if we define the functional $\Phi_\mu:\overline{B(0,\rho)}\rightarrow \mathbb{R}$ by
$\Phi_\mu(\psi):=J_\mu (\psi)+\varepsilon \|\psi-\psi_{\varepsilon }\|,$
then  $\psi_{\varepsilon }$ is a global minimum of $\Phi_\mu$. Therefore,  for $s\in (0,1)$ small enough,  we have
$$\frac{\Phi_\mu(\psi_{\varepsilon }+s \psi)-\Phi_\mu(\psi_{\varepsilon})}{s}\geq 0,\
\hbox{for every}\
 \psi\in B(0,1),$$
i.e.,
$$\frac{J_\mu (\psi_{\varepsilon }+s \psi)-J_\mu (\psi_{\varepsilon })}{s}+\varepsilon
\left\Vert \psi\right\Vert \geq 0.$$
By letting $s$ tend to zero, we get
$
\langle J'_\mu(\psi_{\varepsilon }), \psi\rangle+\varepsilon \|\psi\|
\geq 0.$
This implies that
$\|J'_\mu(\psi_{\varepsilon })\| \leq \varepsilon .$

If we put $w_{n}:=\psi_{\frac{1}{n}}$,  we obtain  $\left\{ w_{n}\right\} \subset B(0,\rho)$. Moreover,
$J_\mu (w_{n})\rightarrow \underline{c}<0,$
 and
 $J'_\mu(w_{n})\rightarrow 0,$
 as
 $n\to \infty.$
Since  $
\left\{ w_{n}\right\} \subset B(0,\rho)$, it follows that  $\left\{ w_{n}\right\}$  is bounded in $E$. So, up to a subsequence still denoted by $w_{n}$, there exists $\psi_\mu\in E$, such that  $\left\{w_{n}\right\} $ converges weakly to $\psi_\mu\in E$. Invoking  Lemma \ref{lem3.3}, we see  that  $w_{n}\rightarrow \psi_\mu$ strongly in $E.$

Now, the fact that  $J_\mu \in C^{1}(E,\mathbb{R})$, implies that
$J'_\mu(w_{n})\rightarrow J_\mu(\psi_\mu),$
as
$n\rightarrow \infty,$
so we have
\begin{equation}\label{nm}
J'_\mu(\psi_\mu)=0
\
\text{and }J_\mu (\psi_\mu)<0,
\end{equation}
hence  $\psi_\mu$ is  a nontrivial weak solution of
problem \eqref{p}. Moreover,  by combining equation \eqref{mn} with equation \eqref{nm}, we obtain that $J_\mu (\varphi_\mu)<0<J_\mu (\psi_\mu),$
i.e., $u_\mu$ and $\psi_\mu$ are distinct. This completes the proof of Theorem \ref{thmm}.\qed
\section{An Application}\label{s5}
As an application of our  results, we shall consider the following problem 
\begin{equation}\label{qq}
\Delta_{p}^{2}\varphi-\lambda \frac{|\varphi|^{p-2}\varphi}{|x|^{2p}}+\Delta_p \varphi=  \mu f(x)|\varphi|^{r-2}\varphi+ \frac{|\varphi|^{p^\ast(p)-2}\varphi}{|x|^{p}} \quad \mbox{ in }\mathbb{R}^N,
\end{equation}
where $1<p<\frac{N}{2}$ and $\lambda>0$. 
We note that problems of type \eqref{qq}  describe e.g.,
 the deformations of an elastic beam. Also, they
 give a model for considering traveling waves in suspension bridges.
 
It is not difficult to see that 
$1<\alpha=p<2p$
and 
$
h(\varphi)=|\varphi|^{r-2}\varphi
$
satisfies the second inequality of 
hypotheses
$(H_1)$
and
$(H_3),$
 with
$c_1=1>0.$ Moreover, a simple calculation shows that $H(\varphi)=\frac{1}{r}|\varphi|^r$ which satisfies $rH(\varphi)=h(\varphi)\varphi,$
 so hypothesis $(H_2)$ is also satisfied for every $\sigma>0$. 
 
Hence if  $r\in(p,p^\ast)$ and   $f \in L^\infty(\mathbb{R}^N),$
then
Theorem \ref{thm} implies that  for every
 $\mu>0,$ there exists $\lambda_0>0$ such that for every $\lambda\in (0,\lambda_0),$
  problem \eqref{qq} has
  a nontrivial solution. Moreover, if $1<r<p$  and 
 $$0<f \in L^{\frac{p^\ast}{p^\ast-r}}(\mathbb{R}^N)\cap L^s_{loc}(\mathbb{R}^N),
 \
 \hbox{  for some}
 \
 s \in (\frac{p^\ast}{p^\ast-r}, \frac{p}{p-r}),$$
  then Theorem \ref{thmm} implies the existence of $\lambda_0>0$ and $\mu_0>0$ such that for every $\lambda\in (0,\lambda_0)$ and $\mu\in (0,\mu_0)$, problem \eqref{qq} has
   at least two nontrivial solutions.

\section{Conclusion}\label{s6}
 The variational method has a long and rich history, and it has given rise to the functional energy. The Mountain pass theorem is used in the first part of this paper to prove the existence of a nontrivial solution for a p-biharmonic problem involving the Hardy-Sobolev exponent.  
 Our first main result generalizes the paper of  {\sc Ghoussoub and Yuan} \cite{8}.
 
In the second part of the paper, the Mountain pass theorem is combined with the Ekeland variational principle to prove the existence of two nontrivial solutions. 
Our second main result of this paper generalizes the work of {\sc Perrera and Zou} \cite{23}. 

We note that the manipulation of the critical Hardy nonlinearity is more complicated and the improvement method used here is an application of the Brezis-Lieb lemma.
As the foundation for further improvements, we aim to obtain even stronger results for problems with discontinuous nonlinearities.

\subsection*{Acknowledgments}
 Repov\v{s} was supported by the Slovenian Research Agency grants P1-0292, J1-4031, J1-4001, N1-0278, N1-0114, and N1-0083. We thank the referee for comments and suggestions.

\end{document}